\documentclass[12pt]{article}
\usepackage{amsmath,amssymb,amsthm,graphicx}
\usepackage{enumitem}
\usepackage{hyperref}
\usepackage{fullpage}

\newtheorem{theorem}{Theorem}[section]
\newtheorem{lemma}[theorem]{Lemma}
\newtheorem{Obs}[theorem]{Observation}

\newtheorem{conjecture}[theorem]{Conjecture}

\begin{document}

\title{Almost All Regular Graphs are Normal}

\author{Seyed Saeed Changiz Rezaei\thanks{Supported in part by a MITACS Accelerate postdoctoral grant.}\\[1mm]
\and
Seyyed Aliasghar Hosseini\\[1mm]
\and
Bojan Mohar\thanks{Supported in part by an NSERC Discovery Grant (Canada),
   by the Canada Research Chair program, and by the
    Research Grant P1--0297 of ARRS (Slovenia).}~\thanks{On leave from:
    IMFM \& FMF, Department of Mathematics, University of Ljubljana, Ljubljana,
    Slovenia.}\\[1mm]
  Department of Mathematics\\
  Simon Fraser University\\
  Burnaby, BC, Canada\\
}

\date{\today}

\maketitle

\begin{abstract}
 In 1999, De Simone and K\"{o}rner conjectured that every graph without 
induced $C_5,C_7,\overline{C}_7$ contains a clique cover $\mathcal C$ 
and a stable set cover $\mathcal I$ such that every clique in $\mathcal 
C$ and every stable set in $\mathcal I$ have a vertex in common. This 
conjecture has roots in information theory and became known as the 
Normal Graph Conjecture. Here we prove that all graphs of bounded 
maximum degree and sufficiently large odd girth (linear in the maximum 
degree) are normal. This implies that for every fixed $d$,  random 
$d$-regular graphs are a.a.s.\ normal.
\end{abstract}

\section{Introduction}
\label{sec:basics}

A graph $G$ is said to be \emph{normal} if it contains a set $\mathcal C$ of cliques and a set $\mathcal I$ of stable sets with the following properties:
\begin{itemize}
  \item[(1)] $\mathcal C$ is a cover of $G$, i.e., every vertex in $G$ belongs to one of the cliques in $\mathcal C$;
  \item[(2)] $\mathcal I$ is a cover of $G$, i.e., every vertex in $G$ belongs to one of the stable sets in $\mathcal I$;
  \item[(3)] Every clique in $\mathcal C$ and every stable set in $\mathcal I$ have a vertex in common.
\end{itemize}

Clearly, a graph is normal if and only if its complement is normal.
This property is reminiscent on the notion of perfect graphs. Namely, normality is one of the basic properties that every perfect graph satisfies. Of course, normality is much weaker condition since every odd cycle of length at least 9 is normal.

The importance of normality of graphs lies in its close relationship to the notion of graph entropy, one of central concepts in information theory; see Csisz{\'a}r and K\"orner \cite{CsKor11} or \cite{CsKoLoMaSi90,KorLongo73,Korner73}.

A set $\mathcal C$ of edges of a graph $G$ is a \emph{star cover} of $G$ if every vertex of positive degree in $G$ is incident with an edge in $\mathcal C$ and each component formed from the edges in $\mathcal C$ is a star (a graph isomorphic to $K_{1,t}$ for some $t\ge1$). In the definition of normality, one may ask that the clique cover $\mathcal C$ is minimal. Note that a minimal clique cover in a triangle-free graph is the same as a star cover. A star cover $\mathcal C$ of a graph $G$ is \emph{nice} if every odd cycle in $G$ contains at least 3 vertices whose incident edges in the cycle are either both or none in $\mathcal C$.

For triangle-free graphs, De Simone and K\"orner \cite{DeSimoneKorner1999} proved the following relationship between normality and existence of nice star covers.

\begin{theorem}
\label{thm:DeSimoneKorner}
A triangle-free graph is normal if and only if it has a nice star cover.
\end{theorem}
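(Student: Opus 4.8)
The plan is to prove both implications by translating condition~(3) of normality into a colouring statement about the star forest, and then analysing the parity obstructions that odd cycles impose. Fix a minimal clique cover of the triangle-free graph $G$; by the remark preceding the theorem this is a star cover $\mathcal C$, and we may assume $G$ has no isolated vertices, so the cliques of $\mathcal C$ are exactly its edges. Write $F=(V,\mathcal C)$ for the spanning star forest these edges form, and call the edges of $G$ outside $\mathcal C$ \emph{free}. The first step is the observation that a stable set $I$ meets a clique $uv\in\mathcal C$ if and only if it contains exactly one of $u,v$, since stability forbids both; hence a stable set crosses every clique of $\mathcal C$ precisely when its indicator is a proper $2$-colouring of $F$, which inside each star means that $I$ consists of the centre alone or of all the leaves. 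Consequently $G$ is normal with respect to $\mathcal C$ if and only if the $1$-classes of a family of proper $2$-colourings of $F$ that are stable in $G$ cover $V$; equivalently, calling such a colouring \emph{good}, if and only if every vertex lies in the $1$-class of some good colouring.

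For the necessity direction I would first record a parity lemma: on any odd cycle $C$ the vertices whose two cycle-edges disagree in type (one in $\mathcal C$, one free) sit exactly at the type-changes of the cyclic edge-sequence, so their number is even; hence the number of \emph{good} vertices (those with both or neither incident cycle-edge in $\mathcal C$) is odd, and in particular at least one. Thus the nice condition is equivalent to excluding an odd cycle with a \emph{unique} good vertex. Assuming $G$ is normal, I would suppose for contradiction that some odd cycle $C$ has exactly one good vertex; by the lemma the types then alternate between $\mathcal C$ and free all around $C$ except at that single vertex. Taking a stable set of the cover through a suitably chosen vertex and propagating the forced colours along $C$ (each clique-edge flips membership, each free edge forbids both endpoints), the single defect in the otherwise-alternating pattern forces, by the odd length of $C$, some free edge to have both endpoints in the stable set, a contradiction. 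This generalises the direct computation showing that $C_5$ is not normal.

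For the sufficiency direction, assume $\mathcal C$ is a nice star cover and fix a target vertex $v$; I want a good colouring with $v$ in its $1$-class. I would introduce one Boolean variable per component of $F$, recording which side of the star is placed in the $1$-class, together with the clause ``not both endpoints in'' for each free edge and the unit clause fixing $v$'s side. A satisfying assignment is exactly a good colouring containing $v$, so it remains to show the system is satisfiable. The key point is that any failure produces a chain of forced sides that closes up with reversed parity; tracing this chain through clique-segments and free edges of $G$ yields a closed odd walk, which reduces to an odd cycle carrying at most one good vertex, contradicting niceness. Running this for every $v$ and collecting the resulting $1$-classes produces the desired stable-set cover.

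I expect the sufficiency direction to be the main obstacle: while the reduction to a colouring/satisfiability problem is clean, turning an unsatisfiable instance into a genuine odd cycle of $G$ with fewer than three good vertices requires care, since the forcing chain lives on the auxiliary component structure and must be pulled back to an actual cycle without losing control of the edge-types that determine which vertices are good. The parity lemma, which pins down that good vertices occur in odd number and that niceness is exactly the exclusion of the ``single good vertex'' configuration, is what lets both directions hinge on the same checkable local condition.
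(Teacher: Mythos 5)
The paper does not prove this statement at all: Theorem~\ref{thm:DeSimoneKorner} is quoted from De Simone and K\"orner \cite{DeSimoneKorner1999}, so your proposal has to be judged on its own merits rather than against an in-paper argument. Your reduction of condition~(3) to ``one side of each star'' colourings is the right framework, and your necessity direction is essentially sound: once an odd cycle has a unique good vertex, the edge types alternate away from it, and propagating membership of a stable set through the clique edges (flip) and free edges (exclude) around the odd cycle does produce a contradiction. Two points there still need attention. First, you fix an arbitrary minimal clique cover, but necessity must start from the cover that \emph{witnesses} normality; you need the (easy but not free) argument that such a witness can be taken to consist of edges only (a singleton clique at a non-isolated vertex forces every stable set of $\mathcal I$ through it, contradicting stability at a covered neighbour) and then pruned to a minimal edge cover, which is a star forest, without damaging condition~(3). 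Second, your stated contradiction (``some free edge with both endpoints in the stable set'') only covers the case where the unique good vertex has both cycle edges free; when both lie in $\mathcal C$ they belong to one star whose centre is that vertex, and the contradiction is instead with the star constraint. Both are repairable.

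The genuine gap is in sufficiency, exactly where you predicted. The step ``an unsatisfiable instance yields a closed odd walk which reduces to an odd cycle with at most one good vertex'' is asserted, not proved, and the natural construction does not deliver it. The conflict certificate is a pair of forcing chains from $v$ to some vertex $u$ reached with opposite signs; their union is indeed a closed odd walk, but that walk can already contain several good vertices: whenever a chain enters a star at a leaf and must exit at a sibling leaf (which happens at the start of a chain and at the terminal star), it traverses two consecutive $\mathcal C$-edges through the centre, making that centre good, and the junctions at $v$ and at $u$ can each be good as well. Moreover, the standard reduction of a closed odd walk to an odd cycle splices the walk at a repeated vertex, so the two cycle edges at the splice point were not consecutive in the walk and the count of good vertices is not controlled there either. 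So the contradiction with niceness (which requires an odd cycle with \emph{exactly one} good vertex, by Observation~\ref{Obs:Obsr1}) does not follow from what you have written; you would need to choose the conflicting chains much more carefully (e.g.\ minimizing length and ruling out the two-edge detours and degenerate junctions) or argue constructively that the stable set can always be completed. As it stands, the hard half of the theorem is not established.
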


Let $Q$ be a cycle of a graph $G$ and $\mathcal C$ be a star cover of $G$. Then a vertex $v$ of $Q$ is a \emph{good vertex} (with respect to $\mathcal C$) if the two edges of $Q$ incident with $v$ are either both in $\mathcal C$ or none is in $\mathcal C$. We define when a degree-2 vertex on a path is a \emph{good vertex} in the same way.
Let $h$ be the number of components of $Q\cap \mathcal{C}$. Note that the number of good vertices in the cycle $Q$ is equal to $|Q| - 2h$.
Hence, we have the following observation.

\begin{Obs}
\label{Obs:Obsr1}
Let $Q$ be an odd cycle. Then the number of good vertices of $Q$ is odd.
\end{Obs}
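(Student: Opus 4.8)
The plan is to deduce this directly from the counting identity recorded just above the statement, namely that the number of good vertices of $Q$ equals $|Q| - 2h$, where $h$ is the number of components of $Q \cap \mathcal{C}$. Granting this identity, the argument is a one-line parity computation: since $Q$ is an odd cycle, $|Q|$ is odd, while $2h$ is even for every integer $h \ge 0$; hence $|Q| - 2h$ is odd, which is exactly the claim.

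For completeness I would also re-derive the identity, since it carries essentially all the content of the observation. Traversing $Q$ once as a cyclic sequence of edges, each edge is either in $\mathcal{C}$ or not, and a vertex of $Q$ fails to be good precisely when it sits at a transition between an edge of $\mathcal{C}$ and an edge outside $\mathcal{C}$. The edges of $Q$ lying in $\mathcal{C}$ form $h$ maximal arcs (the components of $Q \cap \mathcal{C}$), and each such arc contributes exactly two endpoints, which are distinct across arcs because consecutive arcs are separated by at least one edge outside $\mathcal{C}$. Thus there are exactly $2h$ non-good vertices, and the remaining $|Q| - 2h$ vertices are good.

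The only point requiring a moment's care is to rule out the degenerate situation in which $Q \cap \mathcal{C}$ equals all of $Q$, for then $Q \cap \mathcal{C}$ would have no boundary and the arc count would break down. This cannot happen: by definition every component formed from the edges of $\mathcal{C}$ is a star, and a star is acyclic, so $\mathcal{C}$ cannot contain all edges of the cycle $Q$. The opposite extreme $h = 0$, where no edge of $Q$ lies in $\mathcal{C}$, is harmless, since every vertex is then good and $|Q| - 2h = |Q|$ counts them correctly. With this caveat dispatched, the identity and hence the parity conclusion hold. I do not anticipate any genuine obstacle here: the statement is a bookkeeping consequence of the star-cover structure together with the oddness of $|Q|$.
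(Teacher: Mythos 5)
Your proof is correct and follows exactly the paper's route: the paper derives the observation from the same identity (number of good vertices $= |Q| - 2h$, where $h$ is the number of components of $Q \cap \mathcal{C}$) stated immediately before it, plus the parity of $|Q|$. Your additional verification of the identity and of the degenerate case (all of $Q$ in $\mathcal{C}$ is impossible since stars are acyclic) is sound and merely fills in details the paper leaves implicit.
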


This observation shows that a star cover $\mathcal C$ is nice if and only if every odd cycle $Q$ of $G$ has at least two good vertices.

In their inspiring work, De Simone and K\"orner \cite{DeSimoneKorner1999} proposed the following conjecture:

\begin{conjecture}
\label{conj:Korner}
A graph is normal if it contains no induced cycles of length 5 or 7 and does not contain the complement of $C_7$ as an induced subgraph.
\end{conjecture}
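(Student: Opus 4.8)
The plan is to attack Conjecture~\ref{conj:Korner} by first isolating its triangle-free core, where Theorem~\ref{thm:DeSimoneKorner} applies, and then confronting the presence of triangles separately. Since $\overline{C}_5 = C_5$ and $\{C_7,\overline{C}_7\}$ is closed under complementation, the forbidden family is self-complementary, consistently with normality being a complement-invariant property, so complementation may be used freely. I would begin from the observation that $\overline{C}_7$ contains a triangle (any three vertices at mutual cyclic distance $2$ or $3$), so in a triangle-free graph the only relevant forbidden subgraphs are $C_5$ and $C_7$. In a triangle-free graph every $5$- or $7$-cycle is automatically induced, since any chord joins two cycle vertices at cyclic distance $2$ and creates a triangle (or, for the distance-$3$ chord of a $7$-cycle, a shorter odd cycle). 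Consequently a triangle-free graph with no induced $C_5$ or $C_7$ has odd girth at least $9$ (or is bipartite). The triangle-free case of the conjecture thus reduces to the statement that \emph{every triangle-free graph of odd girth at least $9$ admits a nice star cover}; by Observation~\ref{Obs:Obsr1} it suffices to produce a star cover in which every odd cycle carries at least two good vertices.

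The second step is to construct such a star cover. Writing $h(Q)$ for the number of components of $Q\cap\mathcal C$, the number of good vertices on an odd cycle $Q$ equals $|Q|-2h(Q)$, so the goal is a star cover whose restriction to each odd cycle of length $\ell\ge 9$ has fewer than $\ell/2$ components. I would build $\mathcal C$ from a set of star centers chosen by a greedy or distance-based rule: take a maximal spread-out set of centers and assign each remaining vertex of positive degree to an adjacent center, so that $\mathcal C$ is genuinely a star cover. The length-at-least-$9$ hypothesis is what creates slack, since on a long cycle the selected edges tend to cluster into few short arcs, keeping $h(Q)$ well below $\ell/2$ and hence forcing many good vertices. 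Making this count uniform over \emph{all} odd cycles, including non-induced ones and those passing through high-degree vertices, is the crux; the cleanest way to control $h(Q)$ is to bound the number of cycle edges incident with distinct star centers, which is where a local sparsity or expansion estimate would enter.

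The main obstacle, and the reason the full conjecture remains out of reach, is twofold. First, even in the triangle-free case one must make the good-vertex count succeed with odd girth exactly $9$ and \emph{no} bound on the maximum degree: at high degree a single star center can absorb many consecutive cycle edges, inflating $h(Q)$, so the naive greedy bound degrades. This is precisely the gap that the weaker hypothesis ``odd girth linear in the maximum degree'' is designed to close, and removing the degree dependence is the hard analytic step. Second, the genuinely non-triangle-free case lies outside the scope of Theorem~\ref{thm:DeSimoneKorner}, so nice star covers no longer characterize normality; here one would need either a structural decomposition of $(C_5,C_7,\overline{C}_7)$-free graphs in the spirit of the perfect-graph decompositions, reducing to triangle-free pieces and checking that normality survives the gluing operations, or a direct simultaneous construction of a clique cover $\mathcal C$ and a stable-set cover $\mathcal I$ with the pairwise-intersection property. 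The exclusion of $\overline{C}_7$ is exactly what must be exploited to control how triangles and long odd antiholes interact, and handling that interaction uniformly is what I expect to be the decisive difficulty.
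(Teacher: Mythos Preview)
The statement you were given is Conjecture~\ref{conj:Korner}, the Normal Graph Conjecture; the paper does \emph{not} prove it and offers no proof to compare against. It is listed as open, with only special cases (line graphs of cubic graphs, certain circulants, triangle-free subcubic graphs) settled elsewhere. Your write-up is, appropriately, not a proof but a strategy sketch with explicitly flagged gaps, and on that level it is accurate: the reduction of the triangle-free case via Theorem~\ref{thm:DeSimoneKorner} to ``every triangle-free graph of odd girth at least $9$ has a nice star cover'' is exactly the statement that would be needed, and it is precisely the gap between ``odd girth $\ge 9$'' and the paper's ``odd girth $\ge 16k-19$'' in Theorem~\ref{thm:MainGeneral} that keeps the triangle-free case of the conjecture open.

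One technical slip is worth correcting, since it misidentifies where the difficulty lies. You write that at high degree ``a single star center can absorb many consecutive cycle edges, inflating $h(Q)$''. A star center lying on a cycle $Q$ is incident with only two edges of $Q$, so a single star contributes at most one (length-$2$) component to $Q\cap\mathcal C$; large components \emph{decrease} $h(Q)$ and are good for you. The actual enemy is the opposite phenomenon: long $\mathcal C$-alternating stretches on $Q$, where $Q\cap\mathcal C$ breaks into many single-edge components. That is exactly what the paper controls in Lemmas~\ref{lem:c-alt} and~\ref{lem:longest}, and the dependence on $k$ there comes from the number of ``levels'' $s=k,k-1,\dots,3$ in the greedy construction, not from any one star being large. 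Removing that $k$-dependence is the correct formulation of the obstacle in the triangle-free case.
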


This conjecture has been verified for a very small class of graphs, including line graphs of cubic graphs \cite{Patakfalvi2008}, minimal asteroidal triple graphs \cite{Talmaciu2013}, and a specific class of circulants \cite{Wagler2007}. See also \cite{Talmaciu2009} and \cite{Wagler2008}. We proved this conjecture for triangle-free subcubic graphs in \cite{CubicNormal2014}. The main result of this paper is Theorem \ref{thm:last}  which states that random $d$-regular graphs are a.a.s\ normal.

To prove the Theorem \ref{thm:last}, we first show that all graphs with bounded maximum degree and sufficiently large odd girth which is linear in maximum degree are normal. The following theorem states this result: Every $C_4$-free graph $G$ with maximum degree $k\ge 3$ and odd girth at least\/
$16k-19$ is normal. See Theorem \ref{thm:MainGeneral}.

The proof technique of \cite{CubicNormal2014} would enable us to prove Theorem \ref{thm:MainGeneral} without excluding $4$-cycles. However, since our main motivation is Theorem \ref{thm:last}, it does not seem to be worth of the additional efforts.

\section{Graphs with maximum degree $k$}

Here we extend the method that was used in \cite{CubicNormal2014} for cubic graphs to graphs with vertices of degrees more
than 3. The proof is by induction on $\Delta(G)$ and
gives a bound on the odd girth that is linear in $\Delta(G)$. We
will sustain of trying to optimize this bound since it is too
far from the conjectured value in Conjecture \ref{conj:Korner}.

Let us define a star cover $\mathcal C$ of a graph $G$ by using
the following procedure. We assume that $G$ has no isolated vertices.

\begin{enumerate}
\item[(1)] Let $k=\Delta(G)$, $G_k=G$, and let $s=k$.

\item[(2)] Let $F_s$ be a maximum set of vertex-disjoint $s$-stars
such that $G_s'=G_s\backslash V(F_s)$ has no isolated vertices
apart from those that are already present in $G_s$. (When $s=k$,
there are none, but they may arise later when this step is repeated
with $s<k$.)

\item[(3)] Let $U_s'$ be the set of vertices whose degree in
$G_s'$ is equal to 1 and whose neighbor has degree $s$ in $G_s'$.
Let $U_s''$ be the neighbors in $G_s'$ of vertices in $U_s'$ and
let $U_s = U_s'\cup U_s''$. Add all $U_s'-U_s''$ edges to the
cover and let $G_{s-1}=G_s'\backslash U_s$. Note that
$\Delta(G_{s-1}) \le s-1$ and that the last step may give rise to
some isolated vertices in $G_{s-1}$.

\item[(4)] If $s>3$ then decrease $s$ by one and go to (2).

\item[(5)] Note that the subgraph $G_2$ consists of paths and
cycles, possibly including some isolated vertices. Let $V_2$ be
the set of isolated vertices in $G_2$. For each vertex in $V_2$,
choose one of its neighbors in $U''_s$ where $s$ is as small
as possible. Add the edge joining them into $\mathcal{C}$. If we
have a pattern of edges in $\mathcal C$ like the one shown at the top of Figure \ref{fig:5}, we change it to the cover shown at the bottom of Figure \ref{fig:5}.

\item[(6)] Note that the subgraph $G'' = G_2\backslash V_2$
consists of non-trivial paths and cycles. Cover the vertices of
each path or cycle in $G''$ with 2-stars and at most 2 single
edges.
\end{enumerate}

\begin{figure}
  \centering
  \includegraphics{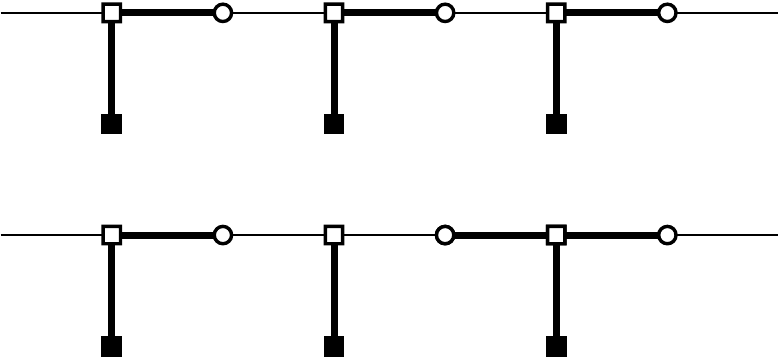}
  \caption{Changing the cover of vertices in $V_2$ from the pattern shown above to the one below. Thicker edges are in $\mathcal C$, the circle vertices are in $V_2$, the full square vertices are all in $U_s'$ and the empty square vertices are all in $U_s''$ for the same value of~$s$.}
  \label{fig:5}
\end{figure}

Using the above construction of a star cover $\mathcal C$, we can
prove the following.

\begin{theorem}
\label{thm:MainGeneral}
Every $C_4$-free graph $G$ with maximum degree $k\ge 3$ and odd girth at least\/
$16k-19$ is normal.
\end{theorem}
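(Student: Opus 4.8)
The plan is to invoke Theorem~\ref{thm:DeSimoneKorner}. Since the odd girth is at least $16k-19>3$, the graph $G$ is triangle-free, and together with the hypothesis that $G$ is $C_4$-free this means $G$ has girth at least $5$. Thus it suffices to prove that the star cover $\mathcal C$ produced by the procedure (1)--(6) is \emph{nice}, and I would first check (routinely, phase by phase) that (1)--(6) really do produce a star cover: each step adds only stars and every positive-degree vertex eventually lies in one. By Observation~\ref{Obs:Obsr1} and the remark following it, niceness is equivalent to the statement that \emph{every odd cycle $Q$ of $G$ has at least two good vertices}. Writing $h$ for the number of components of $Q\cap\mathcal C$, the number of good vertices equals $|Q|-2h$, so the whole problem reduces to the bound $h\le\tfrac12(|Q|-3)$, i.e.\ at least three good vertices; by the parity in Observation~\ref{Obs:Obsr1} it is in fact enough to guarantee two.

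Before counting I would record one structural fact that drives everything. Because $\mathcal C$ is a star cover, each component of $Q\cap\mathcal C$ is either a single spoke or a leaf--center--leaf path (a star has diameter $2$ and distinct stars are vertex-disjoint); and if $v$ is a leaf of its star whose center is \emph{not} a cycle-neighbor of $v$, then the unique cover-edge at $v$ leaves the cycle, so both cycle-edges at $v$ are uncovered and $v$ is good. Consequently the only vertices that can be bad lie at the ends of these short covered runs, i.e.\ at star centers on $Q$ or at leaves whose spoke happens to lie along $Q$. Good vertices are thus created either at the centers of the length-$2$ runs or in the interiors of gaps of length at least $2$; the only way for a cycle to be short of good vertices is to exhibit a long stretch of alternating single covered edges and single uncovered edges, so the real task is to forbid long such stretches.

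The argument itself I would run by induction on $k=\Delta(G)$, exactly as the phase structure of the construction suggests. The base case $k=3$ is our result for triangle-free subcubic graphs in \cite{CubicNormal2014}. For the inductive step, the graph $G_{k-1}=G_k'\setminus U_k$ obtained after the $s=k$ phase has maximum degree at most $k-1$ and odd girth at least that of $G$, and running (2)--(6) on $G$ from $s=k-1$ downward coincides with running the construction on $G_{k-1}$ (empty phases being vacuous); by induction the resulting restriction $\mathcal C'$ is a nice star cover of $G_{k-1}$. The cover $\mathcal C$ of $G$ is $\mathcal C'$ together with the phase-$k$ edges (the $k$-stars of $F_k$ and the $U_k'$--$U_k''$ edges), all of which are incident only to the deleted set $V(F_k)\cup U_k$. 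Hence any odd cycle $Q$ contained in $G_{k-1}$ carries exactly the same cover-edges along it and still has at least two good vertices by induction; the only cycles requiring new work are those meeting $V(F_k)\cup U_k$.

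For such a cycle I would partition it into the maximal arcs lying in $G_{k-1}$ and the arcs passing through phase-$k$ vertices, and show that each phase-$k$ arc is short and contributes only a bounded number of bad vertices. Here the two design choices of the construction are used: the \emph{maximality} of $F_k$ together with girth at least $5$ (which forces the $k$ leaves of a center to be pairwise non-adjacent and to share no second common neighbor) limits how the spokes of phase-$k$ stars can be interleaved along a cycle into single-edge runs, and the local repair of step~(5) depicted in Figure~\ref{fig:5}, whose sole purpose is to destroy the one degenerate pattern that would otherwise yield a run of consecutive single-edge components with no good center between them. Bounding the bad vertices produced per phase by a constant and summing over $s=k,k-1,\dots,3$ gives a total that is linear in $k$; comparing with $|Q|\ge16k-19$ yields $|Q|-2h\ge3$. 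The main obstacle—and the place where the explicit constant $16$ is spent—is precisely this uniform, phase-independent bound on the length of a maximal bad segment: one must verify that neither the interaction of spokes from different phases nor the paths-and-cycles clean-up of steps~(5)--(6) can manufacture a long alternating stretch of single covered and uncovered edges, and it is the combination of maximality, girth~$\ge5$, and the Figure~\ref{fig:5} modification that rules this out.
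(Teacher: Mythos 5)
Your reduction is the same as the paper's: triangle-freeness plus Theorem~\ref{thm:DeSimoneKorner} reduces normality to the construction (1)--(6) yielding a \emph{nice} star cover, and by Observation~\ref{Obs:Obsr1} this means every odd cycle must have at least two good vertices, the only danger being a long stretch alternating between single covered and single uncovered edges. Up to that point you match the paper. But the step you explicitly defer (``one must verify that \dots neither the interaction of spokes from different phases nor the clean-up \dots can manufacture a long alternating stretch'') is the entire content of the proof, and neither of the two mechanisms you propose for it works as stated. First, your induction on $k$ does not close: the inductive hypothesis applied to $G_{k-1}$ only tells you that every odd \emph{cycle} contained in $G_{k-1}$ has two good vertices. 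It says nothing about an \emph{arc} of $G_{k-1}$ that forms part of a longer cycle passing through $V(F_k)\cup U_k$; such an arc could consist entirely of bad vertices without contradicting normality of $G_{k-1}$. To make an induction of this shape close you would need a quantitative inductive statement about alternating \emph{paths}, not about cycles. Second, ``bounding the bad vertices produced per phase by a constant'' is false if read globally (a long cycle can meet arbitrarily many vertex-disjoint $s$-stars of the same phase $s$, each contributing two bad vertices), and if read locally within one maximal alternating stretch it is precisely the unproven claim.

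What the paper actually proves, and what is missing from your sketch, is a monotonicity statement along alternating paths (Lemma~\ref{lem:c-alt}): if an edge of a $\mathcal C$-alternating path was placed in the cover at phase $s$, then within the next eight vertices the path must contain a cover edge placed at some strictly earlier phase $t>s$. The reason is the maximality of $F_s$: if $v_0v_1$ is a spoke of an $s$-star centered at $v_1$ and $v_1v_2\notin\mathcal C$, then $v_2$ is not a leaf of that star, which forces $v_2$ to have been removed already in an earlier phase. Since the phase index can only increase up to $k$, and the path terminates once it reaches $F_k$, every $\mathcal C$-alternating path has at most $16k-24$ vertices (Lemma~\ref{lem:longest}); an odd cycle of length at least $16k-19$ with only one good vertex would yield an alternating path on at least $16k-22$ vertices, a contradiction. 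This single potential-function argument replaces both your induction and your per-phase count, and it is where the constant $16$ comes from. Without articulating why the phase must strictly increase along an alternating stretch, your proposal has no mechanism preventing a stretch from revisiting the same phase indefinitely, so the proof is genuinely incomplete.
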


Let us consider the construction (1)--(6) of the star cover
$\mathcal{C}$ described above. Furthermore, note that if $Q$ is a cycle in $G$ and $v\in V(Q)\cap F_k$ where $k=\Delta(G)$, then either $v$ is a good vertex of $Q$, or there is an edge
$vu\in E(Q)\cap E(F_k)$ such that $u$ is a good vertex in $Q$.

\begin{lemma}
\label{lem:c-alt} Let $P= v_0v_1v_2 \ldots v_8v_9$ be a
$\mathcal{C}$-alternating path such that $v_0v_1\in \mathcal{C}$.
Let $T_1(s)=\{v_iv_{i+1} : v_iv_{i+1}\in S(v_{i+1})\subseteq
F_s\}$, $T_2(s)=\{v_iv_{i+1} : v_i\in U_s'' ,  v_{i+1}\in U_s'\}$
and $T_3(s)=\{v_iv_{i+1}: v_i\in U_s'' ,  v_{i+1}\in V_2\}$. 
If\/ $v_0v_1\in T_1(s)\cup T_2(s)\cup T_3(s)$, then one of the edges
among $v_2v_3, v_4v_5, v_6v_7$ or $v_8v_9$ is in $T_1(t)\cup
T_2(t)\cup T_3(t)$ for $t>s$.
\end{lemma}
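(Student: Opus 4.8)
The plan is to follow $P$ one cover edge at a time and to track the \emph{removal stage} of the vertices, showing that it can remain at the starting level $s$ for only a bounded number of steps. For a vertex $v$, let $\sigma(v)$ be the stage at which $v$ leaves the construction: $\sigma(v)=s$ if $v\in V(F_s)\cup U_s$, and $\sigma(v)=2$ if $v\in V_2$ or $v$ lies on a path or cycle of $G''$. Since each $G_s$ and $G_s'$ is an induced subgraph of $G$ obtained only by deleting vertices, we have the nesting $G_2\subseteq G_s'\subseteq G_s\subseteq G_t'$ whenever $t>s$, which is what lets me compare where edges survive. The first step is a \emph{one-step lemma}: if $w$ is reached from a vertex $x$ with $\sigma(x)\le s$ through a non-cover edge of $P$ and $\sigma(w)=t>s$, then the cover edge of $P$ leaving $w$ already lies in $T_1(t)\cup T_2(t)\cup T_3(t)$. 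To prove it, note $\sigma(x)\le s<t$ forces $x\in G_t'$; if $w$ were an $F_t$-center, all its $G_t$-edges, including $xw$, would be star edges and hence in $\mathcal C$, contradicting that $xw\notin\mathcal C$; and if $w\in U_t'$, then $w$ has degree $1$ in $G_t'$ with unique neighbour $x$, forcing the cover edge of $w$ to be $wx\notin\mathcal C$, again a contradiction. Since $t>s\ge 3$ excludes $w\in V_2\cup V(G'')$, the vertex $w$ is an $F_t$-leaf (giving a $T_1(t)$ edge) or a hub in $U_t''$ (giving a $T_2(t)$ or $T_3(t)$ edge), and in either case we are finished.

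With the one-step lemma in hand it remains to show that, starting from $v_0v_1$ at level $s$, the path reaches a vertex of stage $>s$ within four cover edges. If $v_0v_1\in T_1(s)$ then $v_1$ is an $F_s$-center of full degree $s$ in $G_s$, so every $G_s$-edge at $v_1$ is a star edge; hence $v_1v_2$ leaves $G_s$, $\sigma(v_2)>s$, and the one-step lemma finishes already at $v_2v_3$. If $v_0v_1\in T_2(s)$ then $v_1\in U_s'$ has degree $1$ in $G_s'$, so $v_1v_2$ leaves $G_s'$ and $\sigma(v_2)\ge s$; when $\sigma(v_2)>s$ we are done at $v_2v_3$, and when $\sigma(v_2)=s$ the induced-subgraph argument excludes $v_2$ from being an $F_s$-center, a $U_s'$-vertex, or a $U_s''$-vertex (the last two would put $v_1v_2$ back into $G_s'$), so $v_2$ is an $F_s$-leaf and $v_2v_3\in T_1(s)$ with forward centre $v_3$. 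Applying the $T_1(s)$ analysis to $v_2v_3$ then lifts the level at $v_4v_5$, so the $T_2$ case closes within two cover edges.

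The delicate case is $v_0v_1\in T_3(s)$, where $v_1\in V_2$ and $v_1v_2$ could a priori drop to a smaller stage. Here I would first show that \emph{every} neighbour of a $V_2$-vertex has stage at least that of its chosen hub. Let $t^\ast$ be the smallest stage occurring among the neighbours of $v_1$; the neighbour $w^\ast$ removed at stage $t^\ast$ is the one whose deletion isolates $v_1$ in $G_2$, and since step (2) never creates isolated vertices, $w^\ast$ must disappear in step (3), so $w^\ast\in U_{t^\ast}$. As $v_1\in G_{t^\ast}'$, the vertex $w^\ast$ cannot be the degree-$1$ partner of a $U'$--$U''$ pair, whence $w^\ast\in U_{t^\ast}''$; by the minimality rule in step (5) the chosen hub $v_0$ then sits at $s=t^\ast$, and all neighbours of $v_1$ have stage $\ge s$. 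Thus $\sigma(v_2)\ge s$, and exactly as before $\sigma(v_2)=s$ forces $v_2$ to be an $F_s$-leaf (yielding $T_1(s)$, hence a lift at $v_4v_5$) or a hub $v_2\in U_s''$ whose cover edge lands in $T_2(s)$ or $T_3(s)$. The only way the level can persist at $s$ is therefore through a $V_2$--$U_s''$ alternation of $T_3(s)$ edges, and the main obstacle is to bound the length of such a run. This is precisely the configuration removed by the rerouting of step (5) shown in Figure \ref{fig:5}; together with the $C_4$-freeness of $G$ (which forbids two $V_2$-vertices from having two common $U_s''$-hubs) it caps the number of consecutive level-$s$ edges, forcing one of $v_2v_3,v_4v_5,v_6v_7,v_8v_9$ to have level $t>s$. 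Verifying that the Figure \ref{fig:5} modification leaves no long alternation, and that the resulting flat run fits inside the four-edge budget, is the crux of the argument.
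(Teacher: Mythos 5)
Your one-step lemma, your handling of the $T_1(s)$ and $T_2(s)$ cases, and your claim that every neighbour of a $V_2$-vertex has stage at least that of its chosen hub are all correct, and they reproduce (in cleaner form) exactly the reasoning in the paper's Cases 1 and 2 and the opening of Case 3: the maximality of the $s$-stars excludes $F_t$-centers across non-cover edges, the degree-1 property of $U_t'$ excludes that alternative, and the minimality rule in step (5) together with the no-new-isolated-vertices condition in step (2) pins the hub's stage to the latest removal stage among the neighbours of a $V_2$-vertex. Up to that point your argument is sound and is essentially the paper's.

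The gap is where you stop. Having reduced everything to a run of consecutive $T_3(s)$ edges, i.e.\ a $\mathcal{C}$-alternating segment of the form $U_s''\,V_2\,U_s''\,V_2\,U_s''\,\ldots$, you assert that the Figure \ref{fig:5} rerouting ``caps'' its length and that the cap ``fits inside the four-edge budget,'' but you explicitly decline to verify either claim --- and that verification \emph{is} the lemma in its hardest case, since the bound on the run length is precisely what determines whether the lift to level $t>s$ occurs by $v_8v_9$. The paper closes this as follows: if $v_2\in U_s''$ and $v_3\in V_2$, then $v_4$ again has stage $\ge s$, and in the surviving subcase $v_4\in U_s''$ the step-(5) modification guarantees $v_5\notin V_2$; hence $v_5\in U_s'$, so $v_4v_5\in T_2(s)$, and Case 2 applied there produces the required edge at $v_6v_7$ or $v_8v_9$. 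In other words, at most two consecutive $T_3(s)$ edges can occur, which is exactly why five cover edges suffice. Without establishing this ``$v_5\notin V_2$'' step (your appeal to $C_4$-freeness is not how the paper does it and is not developed far enough to substitute for it), the proof is incomplete.
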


\begin{proof}
The proof is split into three cases depending on whether $v_0v_1$ is in $T_i(s)$ for $i=1, 2,$ or 3.

\textbf{Case 1.} First assume that $v_0v_1\in T_1(s)$.
Since $P$ is a $\mathcal{C}$-alternating path, $v_1v_2\notin
\mathcal{C}$. Therefore $v_2$ is not in the star in $F_s$ centered at
$v_1$. This implies that $v_2$ has been selected in some step
before $s$, because at each step we only select maximum stars
in the remaining graph. This means that $v_2\in F_t\cup U_t''$ for some
$t>s$. Suppose that $v_2\in F_t$. Since the edge $v_2v_1\in E(G_t)$ is not in the star of $F_t$ containing $v_2$, the star is centered at $v_3$. This shows that $v_2v_3\in T_1(t)$. For the other possibility, suppose that $v_2\in U_t''$.
Again, $v_2v_3\in \mathcal{C}$ implies that $v_3\in U_t'$ or $v_3\in
V_2$. Therefore $v_2v_3\in T_2(t)\cup T_3(t)$.

\textbf{Case 2.} Assume that $v_0v_1\in T_2(s)$. Since $v_1\in
U_s'$, $v_1$ was a vertex of degree one in $G'_s$. Therefore
$v_2\in F_s$ or $v_2\in F_t$ or $v_2\in U_t''$ for some $t>s$. Using
the same argument as in case 1, $v_2v_3$ or $v_4v_5\in T_1(t)\cup
T_2(t)\cup T_3(t)$ for $t>s$.

\textbf{Case 3.} Assume that $v_0v_1\in T_3(s)$. Since $v_1\in
V_2$, by step 5 of the construction of $\mathcal{C}$, $v_2$ has been selected at the same step as $v_0$ or at a step
before $v_0$. Therefore, $v_2\in U_t''\cup F_t$
for some $t\geq s$. If $v_2\in U_t''\cup F_t$, with $t>s$, or to $F_s$, then we are done by
cases 1 and 2, applied to the path $v_2v_3v_4\ldots$. Assume finally that $v_2\in U_s''$. Then
$v_3\in U_s'$ or $v_3\in V_2$. If $v_3\in U_s'$, case 2 shows that
$v_4v_5$ or $v_5v_6\in T_1(t)\cup T_2(t)\cup T_3(t)$ for some $t>s$.
Thus we may assume that $v_3\in V_2$. Again since $v_3\in V_2$, by step 5 of
the construction, $v_4$ has been selected at the same step or before $v_2$.
Therefore $v_4\in U_s''$ or $v_4\in F_s$ or $v_4\in U_t''\cup F_t$ for some $t>s$. If
$v_4\in U_t''\cup F_t\cap F_s$, we are done. Assume that $v_4\in U_s''$.
Step 5 of the construction indicates that $v_5\notin V_2$. Therefore
$v_5\in U_s'$. Case 2 shows that $v_6v_7$ or $v_8v_9\in T_1(t)\cup
T_2(t)\cup T_3(t)$ for $t>s$.
\end{proof}

\begin{lemma}
\label{lem:longest} If $G$ has maximum degree $k\ge
3$, any $\mathcal{C}$-alternating path has at most $16k-24$ vertices.
\end{lemma}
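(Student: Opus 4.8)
The plan is to bound the length of a $\mathcal{C}$-alternating path by tracking how the index $s$ (the star-size parameter from the construction) increases along the path. Lemma~\ref{lem:c-alt} is precisely the engine for this: it says that if an edge of the alternating path belongs to $T_1(s)\cup T_2(s)\cup T_3(s)$, then within the next four edges we find an edge belonging to $T_1(t)\cup T_2(t)\cup T_3(t)$ with a \emph{strictly larger} parameter $t>s$. Since the parameter $s$ ranges over the finite set $\{3,4,\ldots,k\}$, it can strictly increase at most $k-3$ times before it is exhausted. The idea is that once $s$ can no longer increase, the path must terminate, so the total length is controlled.

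First I would set up the bookkeeping so that Lemma~\ref{lem:c-alt} can be applied repeatedly. Consider a $\mathcal{C}$-alternating path $P$ and orient it so that its first $\mathcal{C}$-edge $v_0v_1$ lies in some $T_i(s_0)$; one should check that every $\mathcal{C}$-edge of the path falls into one of the three types $T_1,T_2,T_3$ for an appropriate value of the parameter, so that the classification is exhaustive and the lemma always applies at the start of each segment. Then I would iterate: starting from an edge with parameter $s$, Lemma~\ref{lem:c-alt} produces, within the next four edges (i.e.\ among $v_2v_3,\ v_4v_5,\ v_6v_7,\ v_8v_9$), an edge with strictly larger parameter $t>s$. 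Re-indexing the path to start at that new edge and applying the lemma again, the parameter strictly increases, and each application consumes at most a bounded number of vertices of the path.

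The counting then runs as follows. The parameter starts at some value $\ge 3$ and strictly increases with each application of the lemma, so there can be at most $k-3$ increases before it reaches $k$, after which no further increase is possible and the path cannot be extended. Each increase is ``charged'' the vertices consumed in the worst case of the lemma; reading the lemma, the guaranteed larger-parameter edge can be as far as $v_8v_9$, so each step advances us along the path by up to $8$ vertices, with additional constant overhead at the two ends of the path (the initial segment before the first classified edge, and the tail after the last possible increase). I would then simply add up the per-step cost over the at most $k-3$ steps, plus the boundary terms, and verify that the total is at most $16k-24$. This is essentially an arithmetic verification: roughly $8(k-3)$ from the steps plus a constant from the endpoints, tuned so that the stated bound comes out exactly.

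The main obstacle I anticipate is not the counting itself but justifying that the hypotheses of Lemma~\ref{lem:c-alt} are met at \emph{every} iteration and that the worst-case spacing is correctly accounted for. In particular, one must confirm that after re-indexing the path to begin at the newly found edge with parameter $t$, that edge genuinely lies in $T_1(t)\cup T_2(t)\cup T_3(t)$ with $v_0v_1\in\mathcal{C}$ as required by the lemma's hypothesis, so that the lemma can be reapplied cleanly. One must also handle the possibility that the guaranteed edge appears earlier than $v_8v_9$ (which only helps) and ensure the endpoints and parity constraints (via Observation~\ref{Obs:Obsr1}) are treated consistently. Getting the additive constants exactly right so the final bound reads $16k-24$ rather than merely $O(k)$ is where the care lies, but the structure of the argument — a strictly increasing bounded parameter advancing by a bounded amount per step — is what makes the length linear in $k$.
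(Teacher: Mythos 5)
Your overall strategy --- iterate Lemma~\ref{lem:c-alt} to drive the parameter $s$ strictly upward and bound the number of iterations by the range $\{3,\dots,k\}$ --- is the same engine the paper uses, but your accounting models the path as a \emph{single} monotone run, and that is where the argument has a genuine gap. The sets $T_1(s),T_2(s),T_3(s)$ are orientation-dependent: for instance $v_iv_{i+1}\in T_1(s)$ requires the star of $F_s$ to be centered at the \emph{second} vertex $v_{i+1}$, and $T_2(s),T_3(s)$ likewise fix which endpoint is in $U_s''$. So the check you flag as needed --- that every $\mathcal C$-edge of the path, with the path's orientation, falls into one of the three types --- fails: a star edge lies in $T_1(s)$ only when traversed toward the star's center, and the $\mathcal C$-edges produced in step (6) of the construction (the 2-stars and single edges covering the paths and cycles of $G''$) lie in no $T_i(s)$ at all. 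Consequently Lemma~\ref{lem:c-alt} only pushes the parameter up in one direction along the path, and a $\mathcal C$-alternating path can decompose into \emph{two} increasing runs (one for each orientation) together with a stretch of at most $8$ vertices coming from $G''$.

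The paper's count reflects exactly this decomposition: each run terminates once it reaches $F_k$ (this is the observation stated just before Lemma~\ref{lem:c-alt}, which you invoke implicitly when you say the path ``cannot be extended'' after $s$ stops increasing), so each run has at most $8(k-2)$ vertices, and adding the at most $8$ vertices in $G''$ gives $2\cdot 8(k-2)+8=16k-24$. Your tally of roughly $8(k-3)$ plus boundary constants is about half of this; it is not a sharper bound but an incomplete one, since the single-run model does not cover paths containing a second, oppositely oriented run or a $G''$ segment. To repair the proof you need (i) the orientation analysis splitting $P$ into at most two directed runs plus at most $8$ vertices of $G''$, and (ii) the explicit termination of each run at $F_k$; the arithmetic then comes out to $16k-24$ without any tuning.
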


\begin{proof}
Consider a $\mathcal{C}$-alternating path $P=u_0u_1u_2\ldots$. If an edge $u_iu_{i+1}\in \mathcal{C}$ is in $ T_1(s)\cup T_2(s)\cup T_3(s)$, then Lemma \ref{lem:c-alt} shows that (at least) every 8 vertices we reach an edge in $T_1(t)\cup T_2(t)\cup T_3(t)$ for some $t>s$. By the observation made before Lemma \ref{lem:c-alt}, once the path hits $F_k$, it can not continue and the vertices of $F_k$ will be the end of the path. Thus the path continues from $u_iu_{i+1}$ with at most $8(k-2)-2$ other vertices. At most 8 edges of $G''$ can be part of a $\mathcal{C}$-alternating path (step 6 of the construction). Thus $P$ is composed of at most two subpaths with $\leq 8(k-2)$ vertices each, plus at most 8 vertices in $G''$. The order of $P$ is therefore at most $16k-24$.
\end{proof}

\begin{proof}[Proof of Theorem~\ref{thm:MainGeneral}]
We assume that $\mathcal C$ has been constructed according to the
process described above. Let $Q$ be a cycle of odd length at least
$16k-19$. By Observation \ref{Obs:Obsr1}, $Q$ has a good vertex
$w$ and it suffices to prove that $Q$ has another good vertex. If
$w$ is the only good vertex of $Q$, deleting $w$ (and maybe its
neighbors in $Q$) from $Q$, gives us a $\mathcal{C}$-alternating
path of length $\ge 16k-22$. Lemma \ref{lem:longest} shows that
there is no alternating path of length $>16k-24$ in $G$. Therefore
$Q$ has another good vertex and this completes the proof.
\end{proof}

\section{Random Regular Graphs}
In this section, we show that all random regular graphs are a.a.s normal. Let $\mathcal{G}_{n,d}$ be the uniform probability space of $d$-regular graphs on the $n$ vertices $\{1,2,\cdots,n\}$ with $dn$ even. In other words, sampling from $\mathcal{G}_{n,d}$ is equivalent to picking such a graph uniformly at random (u.a.r.). Bollob\'{a}s proposed the \emph{configuration model} to generate the uniform probability space $\mathcal G_{n,d}$. The model is described as follows. Letting $W = \{1,\cdots,n\}\times\{1,\cdots, d\}$, a \emph{configuration} $P$ is a partition of $W$ into $\frac{dn}{2}$ pairs. The resulting pairs are called the \emph{edges} of the configuration and the points in $W$ are called \emph{half-edges}. By projecting the set $W$ to the set $\{1,\cdots,n\}$, each configuration $P$ projects to a $d$-regular multigraph $G(P)$ with vertex set $\{v_1,\cdots,v_n\}$. Furthermore, a pair $(x, y)$ in $P$ corresponds to an edge $(v_i, v_j )$ of $G(P )$ where $x = (i,k)$ for some $i\in\{1,\cdots,n\}$ and for some $k\in\{1,\cdots,d\}$, and $y = (j,l)$ for some  $j\in\{1,\cdots,n\}$ and some $l\in\{1,\cdots,d\}$. Since we are interested in simple graphs, each graph in the uniform probability space of $d$-regular graphs, i.e.,  $\mathcal{G}_{n,d}$, corresponds to precisely $d!^n$ configurations. Thus, by taking the projection of a random configuration and conditioning on it being a simple graph, we obtain a random $d$-regular graph on $\{1,\cdots,n\}$ with the uniform distribution over all such graphs (see \cite{janson2000random, wormald1999models}). 

\begin{figure}
  \centering
  \includegraphics{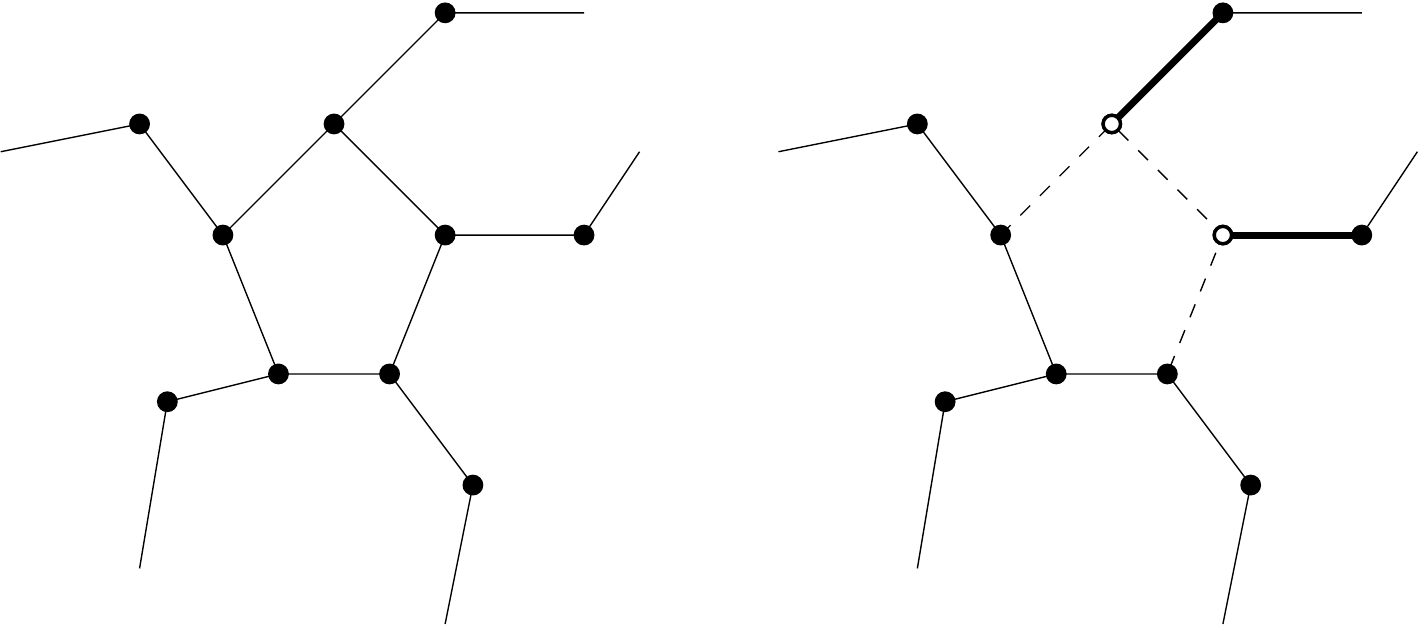}
 \caption{Removing three consecutive edges from each odd cycle in $G\in\mathcal G_{n,d}$ with length $\leq 16d - 24$. Thick edges are forced to be in the cover. Empty circles are forced to be good vertices.}
\label{fig:bc}
\end{figure}

Now we prove the following theorem.
\begin{theorem}\label{thm:last}
For every fixed $d\geq 3$, $G\in\mathcal{G}_{n,d}$ is a.a.s.\ normal.
\end{theorem}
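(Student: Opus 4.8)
The plan is to reduce Theorem~\ref{thm:last} to Theorem~\ref{thm:MainGeneral} by showing that a typical $G\in\mathcal{G}_{n,d}$ looks like a graph of large odd girth except on a bounded, well-separated collection of short cycles, which can then be absorbed by local surgery on the star cover.

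First I would record the structural input from the configuration model. For fixed $d$ the number of cycles of length at most $L:=16d-19$ converges to a sum of independent Poisson variables, so a.a.s.\ there are only $O(1)$ of them; moreover a first-moment estimate in the configuration model shows that any subgraph on a bounded number of vertices with cyclomatic number at least $2$ occurs with probability $o(1)$. Consequently, a.a.s.\ all cycles of length at most $L$ are pairwise vertex-disjoint and each has a tree-like neighbourhood: inside any ball of a fixed radius the only cycle present is that short cycle itself. In particular every vertex of a short cycle has neighbours off the cycle lying on no other short cycle, which is exactly the room the surgery needs.

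Next I would run the construction (1)--(6) on $G$ itself. Since that construction and Lemma~\ref{lem:longest} require only $\Delta(G)\le d$, every $\mathcal C$-alternating path has at most $16d-24$ vertices, and the argument proving Theorem~\ref{thm:MainGeneral} already supplies two good vertices on every odd cycle of length at least $16d-19$. For each odd cycle $Q$ shorter than this I would apply the modification of Figure~\ref{fig:bc}: keep three consecutive edges of $Q$ out of $\mathcal C$ and cover their two interior vertices by edges leaving $Q$, which exist and are distinct by the tree-like-neighbourhood fact. Each such interior vertex then has both of its $Q$-edges outside $\mathcal C$ and is therefore good, so $Q$ gains two good vertices; a triangle is the degenerate case, where all three edges are dropped and all three vertices become good. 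Because the short cycles are far apart, these surgeries are disjoint and confined to constant-size tree-like regions, so one checks that they keep $\mathcal C$ a star cover and do not create a $\mathcal C$-alternating path long enough to spoil the bound for the long odd cycles. After the surgeries every odd cycle of $G$ has at least two good vertices.

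The final step is to extract from this nice star cover the clique cover and stable set cover required by the definition of normality. For triangle-free graphs this is exactly Theorem~\ref{thm:DeSimoneKorner}, but here the main obstacle appears: $\mathcal{G}_{n,d}$ is a.a.s.\ neither triangle-free nor $C_4$-free, so that characterization cannot be quoted as a black box. The purpose of the isolation established above is precisely to confine the triangles and $C_4$'s to disjoint constant-size tree-like regions, the triangle edges having already been forced out of $\mathcal C$ by the surgery. Away from these regions the De Simone--K\"orner construction of the stable sets goes through unchanged and yields covers satisfying the cross-intersection property~(3); inside each region the stable sets and the edge-cliques are repaired by hand so that no stable set contains both ends of a triangle or $C_4$ edge, every vertex stays covered, and every clique still meets every stable set. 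Verifying that these finitely many local repairs are mutually compatible and globally preserve~(3) is where the real work lies, whereas the probabilistic input is standard and the long odd cycles are handled exactly as in the proof of Theorem~\ref{thm:MainGeneral}.
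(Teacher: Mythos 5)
Your skeleton is the same as the paper's: use first-moment estimates in the configuration model to show that the cycles of length at most $16d-19$ are a.a.s.\ few, pairwise disjoint and far apart; force three consecutive edges of each short odd cycle out of $\mathcal C$ so that the two interior vertices become good (all three vertices, for a triangle); and invoke Lemma~\ref{lem:longest} for every odd cycle of length at least $16d-19$. The difference is the order of operations, and it is not cosmetic. The paper first deletes the three consecutive edges from each short odd cycle and one edge from each $C_4$, so that the resulting graph $G'$ is a.a.s.\ triangle-free, $C_4$-free, connected and of odd girth at least $16d-19$; it then runs the construction (1)--(6) on $G'$, where Theorem~\ref{thm:MainGeneral} applies verbatim, and the interior vertices of the deleted paths are covered by off-cycle edges automatically, by the algorithm itself. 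You instead run the algorithm on $G$ and perform post-hoc surgery. That leaves two steps which you acknowledge only with ``one checks'': adding an edge $uv$ leaving $Q$ to $\mathcal C$ can destroy the star property when $v$ is already a leaf of some star (the component becomes a path on four or more vertices), and deleting the $Q$-edges from $\mathcal C$ can uncover vertices of $Q$; the paper's ordering makes both issues vanish. You would also need to re-verify the alternating-path bound for the modified cover, whereas the paper only ever applies Lemma~\ref{lem:longest} to the cover of $G'$ and then analyzes, case by case, the odd cycles of $G$ that use deleted edges (all three deleted edges of one short cycle, versus one or two, the latter forcing length at least $32d$). None of this is fatal, but as written the surgery is asserted rather than proved.

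Your final paragraph raises a legitimate subtlety --- Theorem~\ref{thm:DeSimoneKorner} is stated only for triangle-free graphs, and $G\in\mathcal G_{n,d}$ has triangles with probability bounded away from $0$ --- but you then explicitly defer ``the real work'' of repairing the stable set cover near the triangles and $C_4$'s, so this part of your proposal is a plan, not a proof. For comparison, the paper does not carry out such a repair either: it exhibits a star cover of $G$ that is nice for every odd cycle of $G$ (each triangle having all three edges outside $\mathcal C$, hence three good vertices) and passes directly to normality. So you have correctly located the one step of the argument that is not reducible to Section~2, but you have not closed it.
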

\begin{proof}
First we delete three consecutive edges from each odd cycle of $G\in \mathcal{G}_{n,d}$ with length $\leq 16d - 19$ (see Figure \ref{fig:bc}) and then one edge from each 4-cycle. Now note that the expected number of copies of any subgraph $H$ of $G$ with $v$ vertices and $e$ edges is $O\left(n^{v-e}\right)$. Thus, the expected number of copies of subgraphs with more edges than vertices is $O\left(n^{-1}\right)$. Consequently, cycles of $G$ of length $\leq 16 d - 19$ are a.a.s.\ pairwise disjoint and pairwise at distance at least $32d$. Thus these cycles of $G$ are not connected with paths of bounded length a.a.s. (see \cite{janson2000random, wormald1999models}). Using this, along with the fact that $d\geq 3$, the resulting graph $G^\prime$ is a connected triangle-free graph a.a.s. Now we apply the algorithm proposed in the previous section to find a nice star cover $\mathcal C$ of $G^\prime$. We claim that the same star cover is nice for $G$. Adding back the deleted edges, every odd cycle of $G^\prime$ is nice. Consider odd cycles in $G$ using at least one of the edges in $E(G)\setminus E(G^{\prime})$. If such a cycle contains all three edges removed from a ``short" odd cycle, then two vertices on these edges will be good (see Figure \ref{fig:bc}). Since the number of good vertices in an odd cycle is odd due to Observation \ref{Obs:Obsr1}, we have at least three good vertices. If an odd cycle contains one or two edges removed from a ``short" odd cycle or a 4-cycle, using the fact that ``short" cycles are all disjoint and pairwise at distance at least $32d$, it is a.a.s.\ of length $\geq 32d$ and therefore  by Lemma \ref{lem:longest} it has at least three good vertices. Furthermore, in the proof of Theorem \ref{thm:MainGeneral} we showed that any odd cycle of length $>16d-19$ has at least 3 good vertices and consequently we obtain a cover for $G$ which is nice. Thus $G$ is a.a.s. normal.
\end{proof}


\bibliographystyle{abbrv}
\bibliography{normal}


\end{document}